\documentclass[12pt]{article}
\usepackage{amsmath,amssymb,amsthm,latexsym}
\usepackage{graphicx}
\usepackage{xcolor}
\usepackage{hyperref}
\usepackage{geometry}
\numberwithin{equation}{section}
\newtheorem{theorem}{Theorem}[section]
\newtheorem{lemma}{Lemma}[section]

\newtheorem{proposition}{Proposition}[section]

\newtheorem{remark}{Remark}[section]

\newtheorem{thm}{Theorem}
\title{\textbf{On the first eigenvalue of the area Jacobi operator for complex curves in K\"ahler surfaces}}
\author {Zhenxiao Xie\\}
\date{}
\begin{document}
\maketitle
\begin{abstract}
In this paper, we investigate the first eigenvalue $\Lambda_1$ of the area Jacobi operator for complex curves in K\"ahler surfaces, establishing an extrinsic counterpart to the classical Lichnerowicz theorem for the Laplace-Beltrami operator. By analyzing the second variation of a conformally invariant Willmore-type functional, we derive the lower bound  $\Lambda_1 \geq 2\,\mathfrak{Ric}$, where $\mathfrak{Ric}$ denotes the infimum of the ambient Ricci curvature. For K\"ahler-Einstein surfaces with positive Einstein constant $\mathfrak{c}>0$, this bound reduces to $\Lambda_1 \geq 2\mathfrak{c}$. We then explore the equality case, computing the exact dimension of the corresponding first eigenspace in terms of the area, genus, and the dimension of a space of holomorphic sections. This analysis shows that the equality is achieved for all curves of genus $g \leq 1$. 
\end{abstract}

\indent{\bf Keywords:} first eigenvalue; area Jacobi operator; complex curves; Willmore functional 
\vskip 0.2cm

\indent{\bf MSC(2020):\hspace{2mm} 53A10 53C55 58J50} 

\section{Introduction}
The first eigenvalue of the Laplace-Beltrami operator on a compact Riemannian manifold encodes crucial geometric information, including curvature bounds. A fundamental result in this direction is the Lichnerowicz theorem~\cite{Lichnerowicz}, which states that if the Ricci curvature of an \(n\)-dimensional compact manifold satisfies \(\mathrm{Ric} \ge (n-1)K\) for some \(K>0\), then the first positive eigenvalue \(\lambda_1\) obeys \(\lambda_1 \ge nK\). This estimate is sharp, and the Obata theorem~\cite{Obata1962} characterizes the equality case: \(\lambda_1 = nK\) holds iff the manifold is isometric to a sphere of constant sectional curvature \(K\). In the K\"ahler setting, Futaki~\cite{Futaki} proved that the first eigenvalue of the complex Laplacian is bounded below by \(k\) for a compact K\"ahler manifold satisfying \(\mathrm{Ric} \ge k\) with \(k>0\). The equality case was later characterized by Tam and Yu~\cite{Tam-Yu}.  

The area Jacobi operator $\mathcal{L}$, which governs the second variation of the area functional for minimal submanifolds, is a second-order, self-adjoint elliptic operator. As such, it can be regarded as the extrinsic counterpart to the intrinsic Laplace–Beltrami operator. Unlike the latter, it is not positive (or even nonnegative) in general. We denote by $\Lambda_1$ the smallest nonzero eigenvalue of $\mathcal{L}$, referred to as the first eigenvalue. Since $\mathcal{L}$ serves as the stability operator, $\Lambda_1$ governs the infinitesimal behavior of the minimal submanifold under deformations. A positive $\Lambda_1$ means the submanifold is locally minimizing, whereas a negative one indicates instability.
In recent decades, considerable attention has been devoted to estimating the first eigenvalue of the area Jacobi operator for minimal surfaces in various ambient spaces. 
In \cite{Perdomo}, Perdomo proved that if $\Sigma^2$ is a compact oriented minimal surface which is not totally geodesic in $\mathbb{S}^3$, then the first eigenvalue 
of its area Jacobi operator satisfies $\Lambda_1 \leq -4$, with equality if and only if $\Sigma^2$ is isometric to the Clifford torus.  
This result sparked a series of further estimates for $\Lambda_1$ on closed hypersurfaces of constant mean curvature or conatnt scalar curvature (see \cite{Alias,Ambrozio,BatistaCavalcanteMelo,Batista,Bray,Chen-Cheng,Cheng,Dung,Li-Wang,Merono,Zhu} and references therein). 

Micallef and Wolfson~\cite{MW} obtained fundamental results on the stability of minimal surfaces in 4-dimensional manifolds by considering a complex version of the second variation of area. Subsequently, for superminimal surfaces with negative spin in self-dual Einstein $4$-dimensional manifolds, Montiel and Urbano~\cite{MU} 
proved that the first eigenvalue of the area Jacobi operator satisfies 
$\Lambda_1\geq-2\mathfrak{c}$, where $\mathfrak{c}$  denotes the Einstein constant of the ambient space, and they computed exactly the index 
explicitly in terms of the genus and area of the surface. 

This paper focuses on another important class of codimension-2 minimal surfaces, i.e., complex curves in Kähler surfaces, which are superminimal surfaces of positive spin. It is a classical fact that such surfaces are absolutely area‑minimizing and thus stable. Consequently, their associated area Jacobi operator $\mathcal{L}$ is a non‑negative self‑adjoint elliptic operator, exactly analogous to the Laplace–Beltrami operator on a Riemannian manifold. While the non-negativity of $\mathcal{L}$ is well understood, obtaining sharp quantitative estimates for its spectrum in terms of the ambient curvature is a more subtle problem, and the focus of our investigation. 
We established a universal lower bound for the first eigenvalue $\Lambda_1$ of $\mathcal{L}$. To state our result, we 
denote by 
\[
\mathfrak{Ric} \triangleq \inf_{\substack{p \in \Sigma, \\ u \in UT_p M^4}} \operatorname{Ric}(u, u),
\]
the infimum of the Ricci curvature over all unit tangent vectors at all points of $M^4$. 

\begin{thm}\label{thm-main11}
    For any complex curve in a K\"ahler surface $M^4$, the first eigenvalue $\Lambda_1$ of its area Jacobi operator $\mathcal{L}$ satisfies
    $$\Lambda_1\geq 2\,\mathfrak{Ric}.$$
\end{thm}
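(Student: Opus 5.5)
The plan is to reduce the statement to a Bochner--Kodaira type argument for the $\bar\partial$-operator on the normal bundle. This route is different from the one announced in the abstract, which goes through the second variation of a Willmore-type functional.

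\textbf{Step 1: rewrite $\mathcal{L}$.} Let $\Sigma\subset M^4$ be a closed complex curve with normal bundle $N$. Since $M^4$ is K\"ahler, $N=TM|_\Sigma/T\Sigma$ is a holomorphic line bundle, and the normal connection is its Chern connection. Following the complex form of the second variation of Micallef--Wolfson~\cite{MW}, I will first check the identity
\[
\langle \mathcal{L}V,V\rangle_{L^2}=2\|\bar\partial^N V\|^2_{L^2}, \qquad\text{i.e.}\qquad \mathcal{L}=2\,(\bar\partial^N)^*\bar\partial^N
\]
on sections $V$ of $N$. To verify it, compute $\mathcal{L}V=-\Delta^\perp V-\sum_i (R(e_i,V)e_i)^\perp-\tilde{A}(V)$ in a local unitary frame. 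Then use the standard Weitzenb\"ock formula $\Delta^\perp=-2\bar\partial^*\bar\partial+$ (normal curvature term). The ambient curvature and second fundamental form terms should cancel against the normal curvature because $\Sigma$ is complex. The kernel of $\mathcal{L}$ is then $H^0(\Sigma,N)$, so $\mathcal{L}\ge 0$, consistent with the stability noted in the introduction.

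\textbf{Step 2: pass to $\bar\partial V$.} Let $V\neq0$ satisfy $\mathcal{L}V=\Lambda V$ with $\Lambda\ne 0$. Then $W\triangleq\bar\partial^N V$ is a nonzero section of $\Lambda^{0,1}\Sigma\otimes N$. It satisfies
\[
2\,\bar\partial\bar\partial^* W=\bar\partial(\mathcal{L}V)=\Lambda W .
\]
Using the metric, identify $\Lambda^{0,1}\Sigma\otimes N\cong T^{1,0}\Sigma\otimes N=:E$. On $(0,1)$-forms with values in a line bundle over a Riemann surface, the Kodaira--Nakano identity gives
\[
\bar\partial\bar\partial^*=\nabla'^*\nabla' + \tfrac12 K_E ,
\]
up to a normalization constant to be fixed. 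Here $\nabla'$ is the $(1,0)$-part of the Chern connection and $K_E$ is the curvature of $E$ measured against the area form. Pairing with $W$ and integrating gives
\[
\tfrac{\Lambda}{2}\|W\|^2\ \ge\ \tfrac12\int_\Sigma K_E|W|^2 .
\]

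\textbf{Step 3: identify $K_E$ with the Ricci curvature.} The key geometric input is the pointwise adjunction formula: the curvature of $T\Sigma\otimes N$ equals the restriction of the Ricci form $\rho_M$ to $\Sigma$. The reason is that in the exact sequence $0\to T\Sigma\to TM|_\Sigma\to N\to0$, the subbundle curvature is the restriction minus $\beta^*\beta$ and the quotient curvature is the restriction plus $\beta\beta^*$, where $\beta$ is the second fundamental form. Taking traces, these corrections cancel, so $K_{T\Sigma}+K_N=\operatorname{tr}R^{TM}|_\Sigma$. For a unit tangent vector $e$ to $\Sigma$ this gives
\[
K_E=\rho_M(e,Je)=\operatorname{Ric}(e,e)\ \ge\ \mathfrak{Ric}.
\]
Substituting into Step 2 yields $\Lambda\|W\|^2\ge \mathfrak{Ric}\,\|W\|^2$, up to the normalization. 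Since $W\ne0$, we can divide to obtain the bound on $\Lambda$.

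\textbf{Main obstacle.} The delicate part is the bookkeeping of constants. The final factor must come out as $2$, giving $\Lambda_1\ge 2\,\mathfrak{Ric}$. This depends on three conventions:
\begin{itemize}
\item the factor in $\mathcal{L}=c\,\bar\partial^*\bar\partial$;
\item the normalization of $\Delta$ against $\bar\partial^*\bar\partial$, which is $\Delta=2\bar\partial^*\bar\partial$ on functions;
\item the convention by which curvature is paired with the K\"ahler form.
\end{itemize}
I will calibrate all three on a model case. The natural one is a line $\mathbb{CP}^1\subset\mathbb{CP}^2$ with the Fubini--Study metric. There the normal bundle is $\mathcal{O}(1)$, the eigenvalues of $\mathcal{L}$ can be computed by representation theory, and the bound should be attained. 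If Step 1 turns out to carry an extra zeroth-order curvature term, I will instead use the Willmore-type route suggested in the abstract: view $\Sigma$ as a critical point of a conformally invariant functional and compare its second variation with $\mathcal{L}^2-2\,\mathrm{Ric}\,\mathcal{L}$.
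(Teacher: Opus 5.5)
Your argument is correct in substance, but it is not a different route from the paper's; it is the paper's proof written invariantly. The dictionary is as follows:
\begin{itemize}
\item your $W=\bar\partial V$ is the paper's $\nu_{\bar 1}$;
\item $\bar\partial^*W$ is a constant multiple of $\nu_{\bar11}$;
\item the $\bar\partial$-operator of $E\cong N_\Sigma\otimes\overline{K_\Sigma}$ applied to $W$ is $\nu_{\bar1\bar1}$;
\item Step~1 is \eqref{eq-Area}, your adjunction identity in Step~3 is the Gauss--Ricci equation \eqref{eq-Gauss2}, and Lemma~\ref{lem-Ricci} is the commutation formula for the $(1,0)$- and $(0,1)$-derivatives on $E$.
\end{itemize}
In particular, the second variation of $\mathcal{W}^+$, namely $8\int_\Sigma|\nu_{\bar1\bar1}|^2\,dA$, is exactly the nonnegative term $\|\bar\partial_E W\|^2$ that a Bochner argument throws away. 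Its nonnegativity is all the paper actually uses, so the $\mathcal{W}^+$-minimality is packaging.

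One identity in Step~2 must be corrected. With $\nabla'$ the $(1,0)$-part of the Chern connection, $\bar\partial\bar\partial^*=\nabla'^*\nabla'+\tfrac12K_E$ is false. On $N$-valued $(0,1)$-forms, $\bar\partial^*$ is itself a $(1,0)$-derivative of $W$. Hence $\bar\partial\bar\partial^*=\nabla_E'^*\nabla'_E$ exactly, with no curvature term; Nakano's $[i\Theta,\Lambda]$ vanishes on $(0,1)$-forms over a curve. The curvature appears only when you commute to the $(0,1)$-part:
$$\nabla_E'^*\nabla'_E=\nabla_E''^*\nabla''_E+K_E .$$
The coefficient is $1$, not $\tfrac12$, in the normalization $\mathcal{L}=2\bar\partial^*\bar\partial$ with $K_E=K_{T\Sigma}+K_N=\mathrm{Ric}(e,e)$. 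This gives $\tfrac{\Lambda}{2}\|W\|^2=\|\nabla''_EW\|^2+\int_\Sigma K_E|W|^2\,dA\ge\mathfrak{Ric}\,\|W\|^2$, i.e.\ $\Lambda\ge 2\,\mathfrak{Ric}$.

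In the K\"ahler--Einstein case, equality holds iff $\bar\partial V$ is a holomorphic section of $E$. This is precisely the paper's space $E_0^{\mathcal{W}^+}\setminus E_0^{\mathcal{A}}$. Your calibration confirms it: for a line in $\mathbb{CP}^2$ with holomorphic sectional curvature $4$, one gets $K_E=4+2=6$ and $\Lambda_1=12$. Note that with your formula as written, equality would have required $\nabla'_EW=0$, which is impossible for $E=\mathcal{O}(3)$. So the model case would have exposed the mislabelling.
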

Somewhat surprisingly, the proof of this theorem relies on computing the second variation of the following  conformally invariant functional 
$$\mathcal{W}^+(x) \triangleq \int_{\Sigma} (|\vec H|^2 + K(\mathfrak{e}_1, \mathfrak{e}_2, \mathfrak{e_1}, \mathfrak{e_2}) + K(\mathfrak{e}_1, \mathfrak{e}_2, \mathfrak{e_3}, \mathfrak{e_4}))dA,$$
introduced by Montiel and Urbano in \cite{Montiel-Urbano} for closed oriented surfaces in an oriented $4$-dimensional Riemannian manifold. Here $\vec H$ represents the mean curvature vector, $K$ denotes the ambient  curvature tensor, and $\{\mathfrak{e}_1, \mathfrak{e}_2, \mathfrak{e}_3, \mathfrak{e}_4\}$ is an orthonormal basis with $\{\mathfrak{e}_1, \mathfrak{e}_2\}$ tangent to the surface $x$ and $\{\mathfrak{e}_3, \mathfrak{e}_4\}$ spanning its normal space. 
Associated with $\mathcal{W}^+$, Montiel and Urbano \cite{Montiel-Urbano} also introduced  
$$\mathcal{W}^-(x) \triangleq \int_{\Sigma} (|\vec H|^2 + K(\mathfrak{e}_1, \mathfrak{e}_2, \mathfrak{e_1}, \mathfrak{e_2}) - K(\mathfrak{e}_1, \mathfrak{e}_2, \mathfrak{e_3}, \mathfrak{e_4}))dA.$$
The average of these two functionals is precisely the Willmore functional,
$$\mathcal{W}(x) = \int_{\Sigma} (|\vec H|^2 + K_{1212}) dA,$$
an important global conformal invariant for surfaces in general curved Riemannian manifolds that has attracted considerable attention recently (see \cite{Marques1,Mondino,Mondino1,Wang-Xie} and the references therein). In \cite{Montiel-Urbano}, Montiel and Urbano studied $\mathcal{W}^+$ and $\mathcal{W}^-$ from the perspective of twistor theory and showed that a superminimal surface with positive (resp. negative) spin minimize $\mathcal{W}^+$ (resp. $\mathcal{W}^-$) among its homotopy class. In \cite{Wang-Xie}, we investigated these functionals from the viewpoint of conformal geometry and derived the following expressions, 
\begin{equation}\label{eq-W+}
\mathcal{W}^+(x) = 4 \int_{\Sigma} |\psi|^2 dA + 2\pi (\chi + \chi^{\perp}),~~~\mathcal{W}^-(x) = 4 \int_{\Sigma} |\phi|^2 dA + 2\pi (\chi - \chi^{\perp}),
\end{equation}
where 
$\psi$ and $\phi$ are two local conformal invariants arising from the normal-bundle-valued Hopf differential, and $\chi$ (resp. $\chi^{\perp}$) denotes the Euler characteristics of the tangent bundle (resp. normal bundle). When the ambient space is a Kähler surface, complex curves are superminimal surfaces with positive spin and hence are minimizers of $\mathcal{W}^+$.   

If we restrict the ambient space to a Kähler–Einstein surface with positive scalar curvature, then the estimate established in Theorem \ref{thm-main11} is sharp. In this setting, the curvature invariant $\mathfrak{Ric}$ simplifies to the Einstein constant $\mathfrak{c}>0$. We explore the equality case and are able to compute exactly the dimension of the first eigenspace. This is accomplished by relating it to the space of Jacobi fields of $\mathcal{W}^+$, which corresponds precisely to the preimage of the space of holomorphic sections of a certain line bundle, and then applying the Riemann-Roch theorem in conjunction with the Dolbeault theorem. 
\begin{thm}\label{thm-main2}
 Let \(M^{4}\) be a K\"ahler-Einstein surface with Einstein constant \(\mathfrak{c}>0\), and let \(x\colon\Sigma \rightarrow M^{4}\) be a complex curve. Then 
 \begin{itemize}
    \item[{\rm(1)}] the first eigenvalue $\Lambda_1$ of its area Jacobi operator is at least $2\mathfrak{c}$,
    \item[{\rm(2)}] if $\Sigma$ has genus $g\leq 1$, then $\Lambda_1=2\mathfrak{c}$ and the corresponding eigenspace has dimension
    \[
    \mathfrak{c}\frac{\operatorname{Area}}{2\pi}+1-g+\dim H^{\,0}(N^*_\Sigma\otimes K_\Sigma^2),
    \]
    where $N^*_\Sigma$ is the dual of the normal bundle, $K_\Sigma$ is the canonical line bundle of $\Sigma$, and $H^{\,0}(N^*_\Sigma\otimes K_\Sigma^2)$ denotes  the space of holomorphic sections of $N^*_\Sigma\otimes K_\Sigma^2$. 
\end{itemize}
\end{thm}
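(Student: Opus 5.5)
Part (1) is immediate from Theorem \ref{thm-main11}: for a Kähler–Einstein surface $\operatorname{Ric}=\mathfrak{c}\,g$, so $\mathfrak{Ric}=\mathfrak{c}$ and $\Lambda_1\ge 2\mathfrak{c}$. The work is in part (2), and I would carry it out in three steps.

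\textbf{Step 1: identify the eigenspace with holomorphic sections.} The proof of Theorem \ref{thm-main11} should give the following factorization. Using $J$, write a normal section $V$ as a section of the holomorphic normal line bundle $N_\Sigma$. For complex curves, $\mathcal{W}^+$ attains its minimum $2\pi(\chi+\chi^\perp)$, since $\psi\equiv0$. So the second variation of $\mathcal{W}^+$ at $x$ is $4\int|\delta\psi|^2\,dA\ge 0$. Here $\delta\psi$ is a first-order operator $D$ applied to $V$, essentially $\bar\partial$ composed with a $(1,0)$-derivative, taking values in a bundle whose holomorphic structure is $N_\Sigma\otimes K_\Sigma^{-1}$-twisted. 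On the other hand, the second variation of $\mathcal{W}^+$ can be computed directly via the first and second variations of area. In the Einstein case this should give a factorization of the form
\[
\delta^2\mathcal{W}^+(V)=\int_\Sigma \langle (\mathcal{L}-2\mathfrak{c})\mathcal{L}V, V\rangle\,dA \;=\;4\int_\Sigma|DV|^2\,dA ,
\]
up to harmless constants. I would verify this by writing $\mathcal{L}=-\Delta^\perp-\text{(Ric terms)}$ in a local complex coordinate $z$ as $-4\,\bar\partial^*\bar\partial$ on $N_\Sigma$ (the holomorphic sections of $N_\Sigma$ are exactly $\ker\mathcal{L}$, the Jacobi fields of area). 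Then $\mathcal{L}(\mathcal{L}-2\mathfrak{c})$ is a composition of $\bar\partial$ with its adjoint, one degree higher. Consequently the $2\mathfrak{c}$-eigenspace of $\mathcal{L}$ equals $\{V\perp\ker\mathcal{L}: DV=0\}$, where $DV=0$ means that $\mathcal{L}V$, or $\partial V$ in the suitable sense, is a holomorphic section of a line bundle $E$. I expect $E\cong N_\Sigma\otimes K_\Sigma^{-1}\otimes\ldots$, to be pinned down by the adjunction formula $K_\Sigma=K_M|_\Sigma\otimes N_\Sigma$. This computation and the correct identification of $E$ is the main obstacle. I would do it via the structure equations in a unitary frame $\{e_1,e_2\}$ adapted to the curve, where the second fundamental form is a holomorphic section of $K_\Sigma^2\otimes N_\Sigma$, or equivalently of $\mathrm{Hom}(T\Sigma\otimes T\Sigma,N)$.

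\textbf{Step 2: count the dimension.} The eigenspace is then $\{V: \mathcal{L}V=2\mathfrak{c}V\}$, which is isomorphic to the space of holomorphic sections of $E$ modulo the part coming from $\ker\mathcal{L}$. More precisely, the map $V\mapsto \partial$-derivative is injective on $(\ker\mathcal{L})^\perp$, and its image is identified with $H^0(E)$ after quotienting appropriately. By Riemann–Roch, $\dim H^0(E)-\dim H^1(E)=\deg E+1-g$. By Serre/Dolbeault duality, $H^1(E)\cong H^0(E^*\otimes K_\Sigma)$, and I expect this to be $H^0(N^*_\Sigma\otimes K_\Sigma^2)$. That forces $E=N_\Sigma\otimes K_\Sigma^{-1}$. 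Its degree is $\deg N-\deg K=\chi^\perp+\chi$. Gauss–Bonnet applied to $K_{1212}+K_{1234}$, which for a complex curve in a Kähler–Einstein surface equals the Ricci form on $T\Sigma$, gives $2\pi(\chi+\chi^\perp)=\int \mathfrak{c}\,dA$. Hence $\deg E=\mathfrak{c}\operatorname{Area}/2\pi$, which matches the stated formula.

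\textbf{Step 3: show equality is attained and handle $\ker\mathcal{L}$.} For $g\le1$ the count is positive: $\mathfrak{c}\operatorname{Area}/2\pi>0$ and $1-g\ge0$. So the eigenspace is nonzero and $\Lambda_1=2\mathfrak{c}$. It remains to check that the holomorphic sections of $E$ correspond bijectively to eigenfunctions rather than to $\ker\mathcal{L}$. I would do this by showing that every holomorphic section of $E$ lies in the image of the surjective map $V\mapsto \mathcal{L}V$ from the eigenspace. This uses that $\mathcal{L}$ restricted to the eigenspace is multiplication by the nonzero constant $2\mathfrak{c}$, so that $V$ can be reconstructed as $\tfrac{1}{2\mathfrak{c}}$ times a $\bar\partial^*$-type image of the section. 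Exactness of this correspondence is the second delicate point.
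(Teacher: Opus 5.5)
Your architecture matches the paper's. The Ricci identity turns the second variation of $\mathcal W^+$ into $\frac12\int_\Sigma\bigl(|\mathcal L V|^2+2\mathfrak c\langle\mathcal L V,V\rangle\bigr)\,dA$. Hence the Jacobi fields of $\mathcal W^+$ are $\ker\mathcal L\oplus E_{2\mathfrak c}$, and Riemann--Roch on $N_\Sigma\otimes K_\Sigma^{-1}$ gives the count. You do need to correct your description of the operator $D$. The variation $\partial a/\partial t=\bar\nu_{11}=\overline{\nu_{\bar1\bar1}}$ is second order and consists of two $(0,1)$-derivatives, so $D=\bar\partial_1\circ\bar\partial$. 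It is not a first-order operator, and it is not $\bar\partial$ composed with a $(1,0)$-derivative. Thus $DV=0$ means that $\bar\partial V$ is a holomorphic section of $N_\Sigma\otimes\overline{K_\Sigma}\cong N_\Sigma\otimes K_\Sigma^{-1}$. It is not $\mathcal L V$, which is just a normal field. This gives $E$ directly, rather than inferring it from the target formula.

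The genuine gap is the step that converts $\dim H^0(E)$ into $\dim E_{2\mathfrak c}$. The map $V\mapsto\bar\partial V$ from $E_{2\mathfrak c}$ to $H^0(E)$ is injective, since its kernel lies in $\ker\mathcal L$. You also need it to be onto: every holomorphic section $\sigma$ of $E$ must be $\bar\partial V$ for some $V\in\Gamma(N_\Sigma)$. Your Step 3 argument says that $\mathcal L$ acts as $2\mathfrak c$ on the eigenspace, so $V$ is recovered from a $\bar\partial^*$-image. That presupposes the eigenvector $V$ already exists, so it only re-proves injectivity. Without surjectivity even $\Lambda_1=2\mathfrak c$ for $g\le 1$ does not follow: $H^0(E)\neq 0$ tells you nothing about $E_{2\mathfrak c}$ unless $H^0(E)$ lies in the image of $\bar\partial$.

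The paper supplies this as Claim 2. The operator $\bar\partial:\Gamma(N_\Sigma)\to\Gamma(N_\Sigma\otimes\overline{K_\Sigma})$ is surjective because $H^{0,1}(N_\Sigma)\cong H^0(N_\Sigma^*\otimes K_\Sigma)^*=0$. This holds since $\deg(N_\Sigma^*\otimes K_\Sigma)=-\mathfrak c\,\mathrm{Area}/2\pi<0$.

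Your $\bar\partial^*$ idea can be made rigorous, but only by applying the Ricci identity to $\sigma$ itself. The curvature of $N_\Sigma\otimes\overline{K_\Sigma}$ is governed by $R_{1212}+R_{1234}=2\operatorname{Ric}(e_1,\bar e_1)=\mathfrak c$, a constant. So for holomorphic $\sigma$ one gets $\bar\partial\bar\partial^*\sigma=\kappa\sigma$ with $\kappa>0$ a fixed multiple of $\mathfrak c$. Then $V=\kappa^{-1}\bar\partial^*\sigma$ satisfies $\bar\partial V=\sigma$ and lies in $E_{2\mathfrak c}$. That would be a legitimate alternative to the Serre-duality argument, but as written it is missing.
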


This paper is organized as follows. In Section~\ref{sec2}, we review the necessary background on surface theory in  Kähler surfaces. Section~\ref{sec3} is devoted to compute the variation of geometric quantities for a complex curve in a K\"ahler surface. Theorem~\ref{thm-main11} is established in Section~\ref{sec4}. Finally, in Section~\ref{sec5}, we focus on the Kähler-Einstein setting and prove Theorem \ref{thm-main2}.

\section{Preliminaries}\label{sec2}
Let $ M^4 $ be a Kähler surface, and $ x: \Sigma \to M^4 $ a real surface in $ M^4 $. As in \cite{Chern-Wolfson}, we can choose, along $x$, a unitary coframe $ \{\omega_0, \omega_1\}$, with the corresponding dual frame $\{e_0, e_1\}$, such that
\begin{equation}\label{eq-pull}
x^* \omega_1 = \lambda \theta, \quad x^* \omega_0 = \mu \bar{\theta},
\end{equation}
where $ \theta $ is a $ (1,0) $ form on $ \Sigma $ such that the induced metric 
$$ dx \cdot dx =\frac{\theta\otimes \bar\theta+\bar\theta\otimes \theta}{2}={|\theta|^2},$$ 
and $ \lambda, \mu $ are two local complex functions such that $ |\lambda|^2 + |\mu|^2 = 1 $. Note that $x$ is a complex curve if and only if either $\lambda$ or $\mu$ vanishes identically. We denote by $ \omega_{j \bar{k}} $ the connection forms of $ M^4 $ given by 
\begin{equation}\label{eq-con}
 d\omega_j=\omega_{j\bar k}\wedge \omega_{k}, ~~~0\leq j,k\leq 1.   
\end{equation}
They satisfy 
\begin{equation}\label{eq-conn}
\omega_{j\bar k}+\overline{\omega}_{k\bar j}=0, ~~~\nabla e_j=-\omega_{j\bar k} e_k,~~~0\leq j,k\leq 1,\end{equation}
where $\nabla$ denotes the Levi-Civita connection of $M^4$. The curvature form $\Omega_{i\bar j}$ is defined by 
$$d\omega_{i\bar j}-\omega_{i\bar k}\wedge \omega_{k\bar j}=\Omega_{i\bar j}.$$

We denote by $N_\Sigma$ the normal bundle of $x$, and $N_{\Sigma}^{\mathbb{C}}\triangleq N_\Sigma\otimes\mathbb{C}$ its complexification. It follows from \eqref{eq-pull} that 
\begin{equation}\label{eq-normal}
\bar{\mu} \, x^* \omega_1 - \lambda \, x^* \bar{\omega}_0 = 0, 
\end{equation}
which implies $$ n \triangleq \bar{\lambda} \bar{e}_0-\mu e_1 $$ defines a local section of $N_{\Sigma}^{\mathbb{C}}$. Taking the differential of \eqref{eq-normal} yields 
\begin{equation}\label{eq-sfm1}
(\lambda \, d \bar{\mu} - \bar{\mu} \, d \lambda) + \lambda \bar{\mu} \, (x^* \omega_{1 \bar{1}} + x^* \omega_{0 \bar{0}}) = a \theta + b \bar{\theta}, 
\end{equation}
\begin{equation}\label{eq-sfm2}
x^* \omega_{1 \bar{0}} = b \theta + c \bar{\theta}, 
\end{equation}
where $ a $, $ b $ and $ c $ are local complex-valued functions that characterize the second fundamental form of $ x $. In fact, in terms of ${a, b, c}$, the mean curvature vector of $x$ takes the form 
\begin{equation}\label{eq-vH}
\vec{H} = 2b \, n + 2\bar{b} \, \bar{n},
\end{equation}
while the normal-bundle-valued Hopf differential is given by 
$$
2({a} {n} + \bar{c} \bar{n}) \theta^2.
$$
In terms of the notation introduced in \cite{Wang-Xie}, we have $\psi=a$ and $\phi=\bar c$. Thus, the conformally invariant functional $\mathcal{W}^+$ (see \eqref{eq-W+} in the introduction) now takes the form 
\begin{equation}\label{eq-W+1}
\mathcal{W}^+(x) = 4 \int_{\Sigma} |a|^2 dA + 2\pi (\chi + \chi^{\perp}).
\end{equation}
If $ a \equiv 0 $ (resp. $ c \equiv 0 $), then $ x $ is called a super-conformal surface with positive (resp. negative) spin. If, in addition, $ b \equiv 0 $, then $ x $ is referred to as a \emph{superminimal} surface with positive (resp. negative) spin. 

Now we assume that 
$x$ is a complex curve, then \eqref{eq-sfm1} yields 
$$ a \equiv 0,~~~ b \equiv 0,$$
which implies $ x $ is a superminimal surface with positive spin. Consequently, $x$ minimizes the functional $\mathcal{W}^+$ within its homotopy class. Without loss of generality, we assume $\lambda\equiv 1$ and $\mu\equiv 0$. Denote by $\theta_{12}$ (resp. $\theta_{34}$) the connection form of the tangent (resp. normal) bundle of $x$, 
It follows from \eqref{eq-pull}$\sim$\eqref{eq-conn} 
that 
\begin{equation}\label{eq-o1100}
 x^*\omega_{1\bar{1}} = -i\theta_{12}, ~~~x^*\omega_{0\bar{0}} = -i\theta_{34}.   
\end{equation}
Differentiating \eqref{eq-o1100} yields the Gauss-Ricci equation
\begin{equation}\label{eq-Gauss1}
    (R_{1212}+R_{1234})\,\theta\wedge\bar\theta = -2x^*(\Omega_{0\bar0}+\Omega_{1\bar1}),
\end{equation}
where \(R_{1212}\) (resp. \(R_{1234}\)) denotes the curvature of the tangent (resp. normal) bundle, defined by
\[
d\theta_{12} = \frac{R_{1212}}{2i}\,\theta\wedge\bar\theta, \qquad 
d\theta_{34} = \frac{R_{1234}}{2i}\,\theta\wedge\bar\theta.
\]
Note that 
$-i(\Omega_{0\bar0}+ \Omega_{1\bar1})$ is exactly the Ricci form of $M^4$, which implies  
\begin{equation}\label{eq-Ric}
x^*(\Omega_{0\bar0}+ \Omega_{1\bar1})=-\mathrm{Ric}(e_1, \bar e_1)\theta\wedge\bar\theta.
\end{equation}
Therefore, we can also reformulate the Gauss-Ricci  equation \eqref{eq-Gauss1} as 
\begin{equation}\label{eq-Gauss2}
    R_{1212}+R_{1234}=2\mathrm{Ric}(e_1,\bar e_1).
\end{equation}

\section{The variation of a complex curve in a K\"ahler surface}\label{sec3}
Let $x:\Sigma\rightarrow M^4$ be a real surface in the K\"ahler surface $M^4$. 
Consider the variation of $ x $ given by  
$$
X: \Sigma \times (-\varepsilon, \varepsilon) \longrightarrow M^4.
$$
Set $ x_t \triangleq X(\cdot, t) $. For $ t $ sufficiently small, $ x_t $ also defines an immersion of $ \Sigma $ in $ M^4 $. By abuse of notation, we adopt the same symbols as in the previous chapter. 
Along this variation, we choose a unitary coframe $ \{ \omega_0, \omega_1 \} $ and the dual frame $ \{ e_0, e_1 \} $ such that restricted on each $ x_t $, they are adapted, i.e., there exist two locally defined complex-valued functions $\lambda$ and $\mu$ on $\Sigma \times (-\varepsilon, \varepsilon)$ such that 
\begin{equation}\label{eq-pul}
x_t^* \omega_1 = \lambda \, \theta, \quad x_t^* \omega_0 = \mu \, \overline{\theta},
\end{equation}
where $ \theta $ (may depend on $t$) is a locally defined (1,0)-form on $ \Sigma $ with respect to the induced metric $dx_t \cdot dx_t$ such that $ dx_t \cdot dx_t = |\theta|^2 $.
It follows that 
$$
\overline{\mu} \, x_t^* \omega_1 - \lambda \, x_t^* \overline{\omega_0} = 0,
$$
which allows us to define three local complex-valued functions $a, b, c$ on $\Sigma\times (-\epsilon, \epsilon)$ as in \eqref{eq-sfm1} and \eqref{eq-sfm2}. 
With the parameter $t$ fixed, $n \triangleq  \overline{\lambda} \overline{e}_0-\mu e_1 $ defines a complex-valued normal vector of $ x_t $, and the second fundamental form of $x_t$ is completely determined by $a, b$ and $c$.


In the sequel, we assume that $ x $ is a complex curve and, without loss of generality, that 
\begin{equation*}
    \lambda(\cdot, 0) \equiv 1,~~~  \mu(\cdot, 0) \equiv 0.
\end{equation*}
Let   
$X$ be a normal variation with 
\begin{equation}\label{eq-V}
V \triangleq \left.\frac{\partial X}{\partial t}\right|_{t=0}  = \bar{\nu} n + \nu \bar n=\bar{\nu} \bar e_0 + \nu e_0
\end{equation}
as the variation vector field, where $ \nu $ is a locally defined complex-valued function on $\Sigma$. 
We define the complex covariant differentials of $\nu$ and $\bar{\nu}$ as follows, 
\begin{equation}\label{eq-Dv}
\nu_{1}\theta + {\nu}_{\bar 1}\bar{\theta} = D\nu \triangleq d_\Sigma \nu + i\theta_{34}\nu,~~~~~~\bar{\nu}_{1}\theta + \bar{\nu}_{\bar 1}\bar{\theta} = D\bar\nu \triangleq d_\Sigma \bar\nu - i\theta_{34}\bar \nu,
\end{equation}
so that $\nabla V=D\bar{\nu}\, \bar{e}_0+D{\nu}\, {e}_0$, where $d_\Sigma$ denotes the ordinary exterior differential operator on $\Sigma$. Similarly, the second order complex covariant derivatives of $\nu$ and $\bar\nu$ are defined as 
\begin{equation}\label{eq-v1vb1}
\begin{split}\nu_{11}\theta + {\nu}_{1\bar 1}\bar{\theta} = D\nu_1 \triangleq d_\Sigma \nu_1 - (i\theta_{12} - i\theta_{34})\nu_1,\\\bar\nu_{11}\theta + \bar{\nu}_{1\bar 1}\bar{\theta} = D\bar\nu_1 \triangleq d_\Sigma \bar\nu_1 - (i\theta_{12} + i\theta_{34})\bar\nu_1,\\{\nu}_{\bar 11}\theta + \nu_{\bar{1}\bar{1}}\bar{\theta} = D{\nu}_{\bar{1}} \triangleq d_\Sigma {\nu}_{\bar{1}} + (i\theta_{12} + i\theta_{34}){\nu}_{\bar 1},\\\bar{\nu}_{\bar 11}\theta + \bar\nu_{\bar{1}\bar{1}}\bar{\theta} = D\bar{\nu}_{\bar{1}} \triangleq d_\Sigma \bar{\nu}_{\bar{1}} + (i\theta_{12} - i\theta_{34})\bar{\nu}_{\bar 1}.
\end{split}
\end{equation}
It is obvious that 
\begin{equation}\label{eq-bnu}
\bar{\nu}_{\bar1}=\overline{\nu_1},~~~\bar{\nu}_{1}=\overline{\nu_{\bar1}},~~~\bar\nu_{11}=\overline{\nu_{\bar{1}\bar{1}}},~~~\bar\nu_{\bar{1}\bar{1}}=\overline{\nu_{11}},~~~\bar{\nu}_{\bar 11}=\overline{{\nu}_{1\bar 1}}, ~~~\bar{\nu}_{1\bar 1}=\overline{{\nu}_{\bar 11}}.
\end{equation}

The following lemma is a special case of (75) and (76) in our previous work \cite{Wang-Xie}. For the convenience of the reader and to keep the exposition self-contained, we include a detailed proof here. 
\begin{lemma}For a complex curve $x:\Sigma\rightarrow M^4$, the variations of $a$ and $b$ at $t=0$ are given by  
\begin{equation}\label{eq-abt}
\left.\frac{\partial a}{\partial t}\right|_{t=0} = \bar\nu_{11}, \qquad 
\left.\frac{\partial b}{\partial t}\right|_{t=0} = \bar\nu_{1\bar 1}.  
\end{equation}
\end{lemma}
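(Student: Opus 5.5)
We work on $\Sigma\times(-\varepsilon,\varepsilon)$ and pull back the structure equations of $M^4$ by $X$ rather than by each $x_t$. Write
\[
X^*\omega_1=\lambda\theta+p\,dt,\qquad X^*\omega_0=\mu\bar\theta+q\,dt,
\]
where $\theta$ is now a form on $\Sigma\times(-\varepsilon,\varepsilon)$ with no $dt$-component. At $t=0$, \eqref{eq-V} gives $p=0$ and $q=\nu$, since $V=\nu e_0+\bar\nu\bar e_0$ and $\omega_0(V)=\nu$. The equation defining $a,b$, namely \eqref{eq-sfm1} with the relevant forms restricted to $x_t$, holds for every $t$ modulo $dt$. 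The plan is to differentiate it in $t$ at $t=0$. Since $\lambda\equiv1$ and $\mu\equiv0$ at $t=0$, the left side of \eqref{eq-sfm1} linearizes to
\[
d_\Sigma\dot{\bar\mu}+\dot{\bar\mu}\,(x^*\omega_{1\bar1}+x^*\omega_{0\bar0})=d_\Sigma\dot{\bar\mu}-i\dot{\bar\mu}(\theta_{12}+\theta_{34}),
\]
using \eqref{eq-o1100}. The right side linearizes to $\dot a\,\theta+\dot b\,\bar\theta$, because $a=b=0$ at $t=0$, so the variation of $\theta$ does not contribute. Hence everything reduces to computing $\dot{\bar\mu}=\partial_t\bar\mu|_{t=0}$.

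To find $\dot\mu$, take the $dt\wedge(\cdot)$ part of the structure equation $d(X^*\omega_0)=X^*\omega_{0\bar k}\wedge X^*\omega_k$ at $t=0$. On the left this gives
\[
dt\wedge\bigl(\partial_t(\mu\bar\theta)-d_\Sigma q\bigr)=dt\wedge\bigl(\dot\mu\,\bar\theta-d_\Sigma\nu\bigr).
\]
On the right we have $X^*\omega_{0\bar0}\wedge X^*\omega_0+X^*\omega_{0\bar1}\wedge X^*\omega_1$. Its $dt$-parts come from $\omega_{0\bar0}(\partial_t)\,\nu$, from $x^*\omega_{0\bar0}\wedge\nu\,dt$, and from $x^*\omega_{0\bar1}\wedge p\,dt=0$. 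We should also check the $\omega_1$ equation to ensure that no extra term arises from $\omega_{1\bar0}(\partial_t)$ pairing with $\nu$. Because $x^*\omega_{1\bar0}=c\bar\theta$ by \eqref{eq-sfm2} with $b=0$, that term only affects $\dot\lambda$ and the $\bar\theta$-component, not the $(1,0)$ part.

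Comparing the $\theta$- and $\bar\theta$-components, the $\theta$-component forces $\nu_1$-type terms to match $\theta$ parts. The $\bar\theta$-component then gives an identity of the form
\[
\dot\mu=\nu_{\bar1}+(\text{gauge term }\omega_{0\bar0}(\partial_t)\nu).
\]
Here we use the unitary frame freedom: rotating $e_0$ by a phase along $t$, we choose the frame so that $\omega_{0\bar0}(\partial_t)=0$ at $t=0$. Alternatively, we note that the gauge term $i\alpha\nu$ appears consistently and cancels against the corresponding change of $\theta_{34}$. The sign convention $D\nu=d\nu+i\theta_{34}\nu$ should then produce exactly $\dot\mu=\nu_{\bar1}$, so $\dot{\bar\mu}=\bar\nu_1$ by \eqref{eq-bnu}.

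Substituting $\dot{\bar\mu}=\bar\nu_1$ into the linearized equation gives
\[
d_\Sigma\bar\nu_1-(i\theta_{12}+i\theta_{34})\bar\nu_1=\dot a\,\theta+\dot b\,\bar\theta.
\]
The left side is precisely $D\bar\nu_1=\bar\nu_{11}\theta+\bar\nu_{1\bar1}\bar\theta$ from \eqref{eq-v1vb1}. Reading off coefficients yields $\dot a=\bar\nu_{11}$ and $\dot b=\bar\nu_{1\bar1}$. The main obstacle is the bookkeeping in the second step: we must verify that the $dt$-components of the connection forms, i.e.\ the frame gauge along $t$, do not contribute. We also need the time-derivative of $\theta$ to enter only through terms multiplied by $\mu(\cdot,0)=0$ or by $a=b=0$, so that no stray term survives. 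We would handle this by fixing the gauge, or by checking directly that the gauge contributions have the form $i\alpha\bar\mu$ and thus vanish at $t=0$.
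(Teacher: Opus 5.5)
Your overall plan is the paper's: linearize \eqref{eq-sfm1} at $t=0$, extract $\dot\mu$ from the $dt$-component of the pulled-back structure equation $d\omega_0=\omega_{0\bar0}\wedge\omega_0+\omega_{0\bar1}\wedge\omega_1$, and substitute to get $D\bar\nu_1$. Your linearization of both sides is correct, including the remark that the variation of $\theta$ drops out because $a=b=0$. The gap is in the middle step, which carries all the content and which you leave as ``the main obstacle''. Your list of $dt$-contributions there is wrong in two ways.

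First, there is no term $\omega_{0\bar0}(\partial_t)\,\nu$. Write $X^*\omega_{0\bar0}=x_t^*\omega_{0\bar0}+A_{0\bar0}\,dt$. Its $dt$-part wedges with $X^*\omega_0=\mu\bar\theta+\nu\,dt$ to give $A_{0\bar0}\mu\,dt\wedge\bar\theta$, which vanishes at $t=0$, and $A_{0\bar0}\nu\,dt\wedge dt=0$. So no gauge fixing is needed, and the claim that a gauge term ``cancels against the change of $\theta_{34}$'' is not an argument. In fact a formula $\dot\mu=\nu_{\bar1}+(\text{gauge term})$ cannot hold. Under $e_0\mapsto e^{i\varphi}e_0$ with $\varphi(\cdot,0)=0$ one has $\mu\mapsto e^{-i\varphi}\mu$, so $\dot\mu|_{t=0}$ is unchanged because $\mu(\cdot,0)=0$. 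Meanwhile $A_{0\bar0}$ shifts by $-i\,\partial_t\varphi$, which is arbitrary. Your closing remark that gauge contributions carry a factor of $\mu$ is the right observation, but it contradicts what your main step asserts.

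Second, you omit the one $dt$-contribution that actually matters. With $X^*\omega_{0\bar1}=x_t^*\omega_{0\bar1}+A_{0\bar1}\,dt$, the product with $X^*\omega_1=\lambda\theta$ contributes $A_{0\bar1}\lambda\,dt\wedge\theta$. Taken literally, your enumeration gives $(d_\Sigma\nu+i\theta_{34}\nu-\dot\mu\,\bar\theta)\wedge dt=0$. Its $\theta$-component says $\nu_1=0$, which is false for a general normal variation; your remark about ``$\nu_1$-type terms'' matching $\theta$-parts is pointing at exactly this missing term. Including it gives
\[
\bigl(d_\Sigma\nu+i\theta_{34}\nu+A_{0\bar1}\theta-\dot\mu\,\bar\theta\bigr)\wedge dt=0\quad\text{at }t=0 .
\]
The $\theta$-component then fixes $A_{0\bar1}=-\nu_1$, and the $\bar\theta$-component gives $\dot\mu=\nu_{\bar1}$. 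The detour through the $\omega_1$ equation is not needed. With this correction the rest of your argument goes through as written and coincides with the paper's proof.
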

\begin{proof}
Taking the first order derivative with respect to \(t\) at \(t=0\) of the equation
\[
(\lambda d_{\Sigma} \bar{\mu} - \bar{\mu} d_{\Sigma} \lambda) + \lambda \bar{\mu} (x_t^* w_{1\bar{1}} + x_t^* w_{0\bar 0}) = a \theta + b \bar{\theta}
\]
gives
\begin{align}\label{eq-papa}
\left.\frac{\partial a}{\partial t}\right|_{t=0} \theta + \left.\frac{\partial b}{\partial t}\right|_{t=0} \bar\theta 
&= d_\Sigma \left.\frac{\partial \bar\mu}{\partial t}\right|_{t=0} + \left.\frac{\partial \bar\mu}{\partial t}\right|_{t=0} (x^*\omega_{1\bar1} + x^*\omega_{0\bar 0}). 
\end{align}

Next, we compute $\left.\frac{\partial \bar\mu}{\partial t}\right|_{t=0}$. 
By the expression of the variation vector field $V$ in \eqref{eq-V}, we have
\begin{align}X^*\omega_{0} &= x_t^*\omega_{0} + {\nu}dt = \mu\bar{\theta} + {\nu}dt, \label{eq-o0}\\
X^*\omega_{1} &= x_t^*\omega_{1} = \lambda\theta. \label{eq-o1}
\end{align}
Set 
\begin{equation}\label{eq-o01}
X^*\omega_{0\bar 1} \triangleq x_t^*\omega_{0\bar 1} + A_{0\bar 1}dt,    
\end{equation}
where $A_{0\bar 1}$ is a locally defined complex-valued function. 
Note that, on $\Sigma \times (-\epsilon, \epsilon)$  there holds 
\begin{equation*}\label{eq-d}
d = d_{\Sigma} + dt \wedge \frac{\partial}{\partial t}.
\end{equation*}
Apply this to the differential of both hand sides of \eqref{eq-o0}. At $t=0$, the right hand side equals
$$\begin{aligned}
(d_{\Sigma} + dt \wedge \frac{\partial}{\partial t})(x_t^*\omega_{0} + \nu dt)
= d_{\Sigma}x_t^*\omega_{0} + d_{\Sigma}{\nu} \wedge dt + \left.\frac{\partial \mu}{\partial t}\right|_{t=0} dt \wedge \bar{\theta},
\end{aligned}$$
while the left hand side is given by
\[
\begin{aligned}
X^* d\omega_0
&= X^*(\omega_{0\bar 0} \wedge \omega_0 + \omega_{0\bar 1} \wedge \omega_1)= X^*\omega_{0\bar 0} \wedge X^*\omega_0 + X^*\omega_{0\bar 1} \wedge X^*\omega_1 \\
&=\nu x_t^*\omega_{0\bar 0} \wedge dt + x_t^*\omega_{0\bar 0} \wedge x_t^*\omega_0 + x_t^*\omega_{0\bar 1} \wedge x_t^*\omega_1+ A_{0\bar 1} dt\wedge x_t^*\omega_1 \\
&=\nu x_t^*\omega_{0\bar 0} \wedge dt +  x_t^*d\omega_0 + A_{0\bar 1} dt \wedge \theta \\
&= -i \nu \theta_{34} \wedge dt + \lambda A_{0\bar 1}  dt \wedge \theta +  d_{\Sigma}x_t^* \omega_0,
\end{aligned}
\]
where we have used \eqref{eq-o1100} and \eqref{eq-o0}$\sim$
\eqref{eq-o01}.  
Hence, we obtain
\[
\Bigl( d_{\Sigma} \nu + i\theta_{34}\nu + \lambda A_{0\bar 1}\theta- \left.\frac{\partial \mu}{\partial t}\right|_{t=0} \bar{\theta} \Bigr) \wedge dt = 0,
\]
which yields at \(t=0\),
\begin{equation}\label{eq-nu1}
A_{0\bar 1} = -{\nu}_1, ~~~~~~ \left.\frac{\partial \mu}{\partial t}\right|_{t=0} = {\nu}_{\bar1}.    
\end{equation}
Substituting \eqref{eq-nu1} into \eqref{eq-papa} together with \eqref{eq-nu1} gives 
\[
\begin{aligned}
\left.\frac{\partial a}{\partial t}\right|_{t=0} \theta + \left.\frac{\partial b}{\partial t}\right|_{t=0} \bar\theta 
&= d_\Sigma\,\bar\nu_1 - \bar\nu_1 (i\theta_{12} + i\theta_{34})= \bar\nu_{11} \theta + \bar\nu_{1\bar 1} \bar\theta.
\end{aligned}
\]
This finishes the proof.

\end{proof}
In the sequel, we denote by $\mathcal{A}$ the area functional. 
\begin{proposition}
    For a complex curve $x: \Sigma\rightarrow M^4$, the following second variations hold:   
    \begin{equation}\label{eq-Area}
        \left.\frac{\partial^2}{\partial t^2} \right|_{t=0}\mathcal{A}(x_t)
= 4 \int_{\Sigma} |\nu_{\bar1}|^2 \, dA,
    \end{equation}
        \begin{equation}\label{eq-Will}
        \left.\frac{\partial^2}{\partial t^2} \right|_{t=0}\mathcal{W}^+(x_t)
= 8 \int_{\Sigma} |\nu_{\bar1\bar 1}|^2 \, dA .
    \end{equation}
\end{proposition}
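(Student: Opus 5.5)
The plan is to write both functionals as a quantity that does not change along the variation plus a nonnegative quadratic term that vanishes at $t=0$. Differentiating twice then leaves only the square of the first variation of that term. For $\mathcal{W}^+$ the quadratic term is $|a|^2$, and its variation is given by \eqref{eq-abt}. For the area the quadratic term is $|\mu|^2$, and its variation is given by \eqref{eq-nu1}.

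\emph{Area.} Let $\Omega_K$ denote the K\"ahler form of $M^4$, normalized as $\Omega_K=\tfrac{i}{2}(\omega_0\wedge\bar\omega_0+\omega_1\wedge\bar\omega_1)$. By \eqref{eq-pul},
\[
x_t^*\Omega_K=\tfrac{i}{2}\bigl(|\lambda|^2-|\mu|^2\bigr)\theta\wedge\bar\theta=\bigl(|\lambda|^2-|\mu|^2\bigr)dA_t .
\]
Here the area form of $|\theta|^2$ is $\tfrac{i}{2}\theta\wedge\bar\theta$. I will verify this sign and factor explicitly, since the final constant $4$ depends on it. Using $|\lambda|^2+|\mu|^2=1$ we get
\[
\mathcal{A}(x_t)=\int_\Sigma x_t^*\Omega_K+2\int_\Sigma|\mu|^2\,dA_t .
\]
Since $\Omega_K$ is closed and the maps $x_t$ are homotopic on the closed surface $\Sigma$, the first integral is independent of $t$. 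At $t=0$ we have $\mu=0$. Hence, in $\partial_t^2\bigl(|\mu|^2dA_t\bigr)$ every term containing an undifferentiated $\mu$ or $\bar\mu$ vanishes, and the only surviving term is $2|\partial_t\mu|^2dA$. By \eqref{eq-nu1}, $\partial_t\mu|_{t=0}=\nu_{\bar1}$. This gives $\partial_t^2\mathcal{A}|_{t=0}=4\int_\Sigma|\nu_{\bar1}|^2dA$.

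\emph{$\mathcal{W}^+$.} Use the expression \eqref{eq-W+1}. The Euler characteristic $\chi$ is constant. The normal Euler number $\chi^\perp$ is an integer that varies continuously along the regular homotopy $x_t$ (small $t$), so it is also constant. Therefore only $4\int|a|^2dA_t$ contributes. Since $a(\cdot,0)\equiv0$, the same argument as above gives
\[
\partial_t^2\mathcal{W}^+|_{t=0}=8\int_\Sigma|\partial_ta|_{t=0}|^2dA=8\int_\Sigma|\bar\nu_{11}|^2dA,
\]
using the lemma. Finally, \eqref{eq-bnu} gives $\bar\nu_{11}=\overline{\nu_{\bar1\bar1}}$, which yields \eqref{eq-Will}.

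The step I expect to need the most care is justifying that the ``topological'' pieces really are constant in $t$. For the area, this means writing $x_t^*\Omega_K$ and checking the normalization against $dA=\tfrac{i}{2}\theta\wedge\bar\theta$. For $\mathcal{W}^+$, it means noting that $\chi^\perp$ is invariant under regular homotopy. A second point is that $\theta$ depends on $t$, so $dA_t$ varies. This causes no difficulty: its derivative always multiplies $|\mu|^2$ or $|a|^2$, or a first derivative of these, all of which vanish at $t=0$.
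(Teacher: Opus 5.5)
Your proof is correct. For $\mathcal{W}^+$ it is the paper's argument: differentiate $4\int|a|^2dA_t$ twice, using $a(\cdot,0)=0$ and $\partial_t a|_{t=0}=\bar\nu_{11}$. You also state explicitly why $\chi^\perp$ is constant, which the paper leaves implicit.

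For the area you take a genuinely different route. The paper uses the general second variation formula $\partial_t^2\mathcal{A}|_{t=0}=-2\int\langle\partial_t\vec H|_{t=0},V\rangle\,dA$. It computes $\partial_t\vec H$ from $\partial_t b|_{t=0}=\bar\nu_{1\bar1}$ and then integrates by parts. You instead use the K\"ahler form as a calibration and write $\mathcal{A}(x_t)=\int_\Sigma x_t^*\Omega_K+2\int_\Sigma|\mu|^2\,dA_t$. Your normalization is right: $x_t^*(\omega_0\wedge\bar\omega_0)=-|\mu|^2\theta\wedge\bar\theta$ and $\tfrac{i}{2}\theta\wedge\bar\theta=dA_t$. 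After that you only need $\partial_t\mu|_{t=0}=\nu_{\bar1}$ from \eqref{eq-nu1}, not the variation of $b$.

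What your route buys is that the two computations become structurally identical. In each case the functional is a topological term plus twice the integral of the squared norm of a quantity ($\mu$, resp.\ $a$) that vanishes on complex curves. As a result, nonnegativity of the area second variation is visible at once. You also need neither the general second variation formula nor an integration by parts. Terms involving $\mu_{tt}$ or the second $t$-derivative of $dA_t$ drop out automatically.

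What the paper's route buys is the explicit form $\mathcal{L}(V)=2\partial_t\vec H|_{t=0}$ of the Jacobi operator (Remark~\ref{rk-L}). This is what is used in Sections~\ref{sec4} and~\ref{sec5}. From your quadratic form it is recovered by one integration by parts, $4\int|\nu_{\bar1}|^2dA=-2\int(\bar\nu\,\nu_{\bar11}+\nu\,\bar\nu_{1\bar1})\,dA$, so nothing is lost.
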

\begin{proof}
It is well known that 
\[
\left.\frac{\partial^2}{\partial t^2} \right|_{t=0}\mathcal{A}(x_t)=\left.\frac{\partial^2}{\partial t^2} \right|_{t=0} \int_{\Sigma} dA_t 
= -2 \int_{\Sigma} \Bigl\langle \left.\frac{\partial \vec{H}}{\partial t} \right|_{t=0},\, {V} \Bigr\rangle \, dA_t .\]
It follows from \eqref{eq-vH}, \eqref{eq-bnu} and  \eqref{eq-abt} that 
\begin{equation}\label{eq-Ht}
\left.\frac{\partial \vec{H}}{\partial t} \right|_{t=0}=2\bar\nu_{1\bar1}n+ 2\nu_{\bar11}\bar n,
\end{equation}
which yields that 
$$\left.\frac{\partial^2}{\partial t^2} \right|_{t=0}\mathcal{A}(x_t)= -2 \int_{\Sigma} (\nu \bar\nu_{1\bar 1}+\bar\nu \nu_{\bar1 1}) \, dA_t.$$
Then \eqref{eq-Area} follows from integration by parts. 

By the definition, 
$$\frac{\partial }{\partial t} \mathcal{W}^+(x_t)=4\frac{\partial }{\partial t} \int_\Sigma |a|^2 dA_t=4\int_\Sigma \left(a \frac{\partial \bar a }{\partial t}+\bar a \frac{\partial a}{\partial t}+2|a|^2 \bigl\langle \vec H,  V\bigr\rangle\right) dA_t.$$
Then using \eqref{eq-abt}, we obtain 
$$\left.\frac{\partial^2}{\partial t^2} \right|_{t=0}\mathcal{W}^+(x_t)=8\int_\Sigma \left|\frac{\partial a }{\partial t}\Big|_{t=0} \right|^2dA
= 8 \int_{\Sigma} |\nu_{\bar1\bar 1}|^2 \, dA.$$
\end{proof}
\begin{remark}\label{rk-L}
    It follows from \eqref{eq-Ht} that the area Jacobi operator is of the following form 
    $$\mathcal{L}(V)=2\left.\frac{\partial \vec{H}}{\partial t} \right|_{t=0}=4\bar\nu_{1\bar1}n+ 4\nu_{\bar11}\bar n.$$
\end{remark}

\section{The proof of Theorem 1}\label{sec4} 
To prove Theorem 1, we first relate the second variation of $\mathcal{W}^+$ to the area Jacobi operator $\mathcal{L}$. 

Applying integration by parts to the second variation of $\mathcal{W}^+$ in \eqref{eq-Will} yields  
\begin{equation}\label{eq-W+2}
    \left.\frac{\partial^2}{\partial t^2} \right|_{t=0}\mathcal{W}^+(x_t)
= 8 \int_{\Sigma} \nu_{\bar1\bar 1}\bar \nu_{11}\, dA=-8 \int_{\Sigma} \nu_{\bar1\bar 1 1}\bar \nu_{1}\, dA.
\end{equation}
where $\nu_{\bar1\bar 1 1}$ denotes a third-order  covariant derivative of $\nu$, defined by 
\begin{equation}\label{eq-vb1b1}
    \nu_{\bar1\bar 1 1}\theta+\nu_{\bar1\bar 1 \bar1}\bar\theta\triangleq d\nu_{\bar1\bar1}+(2i\theta_{12}+i\theta_{34})\nu_{\bar1\bar1}.
\end{equation}
We also require another third-order covariant derivative of $\nu$,  defined by   
\begin{equation}\label{eq-vb11}\nu_{\bar1 1\bar 1}\theta+\nu_{\bar1 1 \bar1}\bar\theta\triangleq d\nu_{\bar1 1}+i\theta_{34}\nu_{\bar11},
\end{equation}
together with the following Ricci identity. 
\begin{lemma}\label{lem-Ricci}
\[
\nu_{\bar1\bar{1}1} - \nu_{\bar1 1\bar{1}} = -\nu_{\bar1} \operatorname{Ric}(\bar e_1, \bar e_1).
\]
\end{lemma}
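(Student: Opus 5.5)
The identity is the commutation formula for the covariant derivatives of the section $\nu_{\bar1}$. The plan is to apply the covariant exterior derivative twice to $\nu_{\bar1}$ and read off the curvature of the relevant line bundle. By \eqref{eq-v1vb1}, $\nu_{\bar1}$ is differentiated with the connection $i\theta_{12}+i\theta_{34}$:
\[
D\nu_{\bar1}=d_\Sigma\nu_{\bar1}+(i\theta_{12}+i\theta_{34})\nu_{\bar1}=\nu_{\bar11}\theta+\nu_{\bar1\bar1}\bar\theta .
\]
The components $\nu_{\bar11}$ and $\nu_{\bar1\bar1}$ are then sections of bundles differing by the tangent factors carried by $\theta$ and $\bar\theta$. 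This is reflected in the connections $i\theta_{34}$ in \eqref{eq-vb11} and $2i\theta_{12}+i\theta_{34}$ in \eqref{eq-vb1b1}. I read the $\bar\theta$-coefficient in \eqref{eq-vb11} as $\nu_{\bar11\bar1}$.

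\textbf{Step 1.} Fix the structure equation of $\theta$. From $x^*\omega_1=\theta$, $x^*\omega_{1\bar1}=-i\theta_{12}$ and $x^*\omega_{1\bar0}=0$ (since $c$ plays no role in the $\omega_1$ equation when $\mu\equiv0$), \eqref{eq-con} gives $d\theta=-i\theta_{12}\wedge\theta$. Conjugating gives $d\bar\theta=i\theta_{12}\wedge\bar\theta$.

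\textbf{Step 2.} Apply $d_\Sigma$ to
\[
d_\Sigma\nu_{\bar1}=\nu_{\bar11}\theta+\nu_{\bar1\bar1}\bar\theta-(i\theta_{12}+i\theta_{34})\nu_{\bar1}.
\]
The left side is $0$. On the right:
\begin{itemize}
\item Expand $d(\nu_{\bar11}\theta)$ and $d(\nu_{\bar1\bar1}\bar\theta)$ using Step 1.
\item Substitute \eqref{eq-vb11} and \eqref{eq-vb1b1} for $d\nu_{\bar11}$ and $d\nu_{\bar1\bar1}$.
\item Replace $d_\Sigma\nu_{\bar1}$ in the last term by its expression again.
\end{itemize}
All connection-form terms should cancel. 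This cancellation is exactly why the weights $i\theta_{34}$ and $2i\theta_{12}+i\theta_{34}$ were chosen, and it serves as a check on the conventions.

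\textbf{Step 3.} What survives is
\[
(\nu_{\bar1\bar11}-\nu_{\bar11\bar1})\,\theta\wedge\bar\theta \;\text{(up to an overall sign)}\;-\;i(d\theta_{12}+d\theta_{34})\,\nu_{\bar1}=0 .
\]
By the definitions of $R_{1212}$ and $R_{1234}$,
\[
i(d\theta_{12}+d\theta_{34})=\tfrac12(R_{1212}+R_{1234})\,\theta\wedge\bar\theta .
\]
The Gauss--Ricci equation \eqref{eq-Gauss2} turns this into the Ricci term $\mathrm{Ric}(e_1,\bar e_1)\,\theta\wedge\bar\theta$, equivalently \eqref{eq-Ric}. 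Comparing coefficients of $\theta\wedge\bar\theta$ then yields the claimed identity, with the Ricci term written in the paper's notation $\operatorname{Ric}(\bar e_1,\bar e_1)$. By $J$-invariance on a Kähler surface, the Hermitian value $\operatorname{Ric}(e_1,\bar e_1)$ is the Ricci curvature of the unit complex direction $e_1$ (up to the normalization of $e_1$). This is the feature needed later to compare with $\mathfrak{Ric}$.

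\textbf{Main obstacle.} The only real difficulty is bookkeeping of signs and factors. The points to track are the orientation $\theta\wedge\bar\theta$ versus $\bar\theta\wedge\theta$, the factor $2i$ in the definition of $R_{1212}$, and the sign in $d\theta=-i\theta_{12}\wedge\theta$. I would first pin these down by checking the analogous identity for a scalar function on a flat torus (where both sides vanish) and against \eqref{eq-Gauss1}. Then I would verify that the resulting coefficient is exactly $-1$ and not $-\tfrac12$ or $-2$.
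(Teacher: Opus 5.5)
Your computation is the paper's, organized differently. The paper expands $\nu_{\bar1\bar11}\,\theta\wedge\bar\theta=d(\nu_{\bar1\bar1}\bar\theta)+(i\theta_{12}+i\theta_{34})\nu_{\bar1\bar1}\wedge\bar\theta$, substitutes $\nu_{\bar1\bar1}\bar\theta=D\nu_{\bar1}-\nu_{\bar11}\theta$, and is left with $-x^*(\Omega_{0\bar0}+\Omega_{1\bar1})\nu_{\bar1}$, which it converts by \eqref{eq-Ric}. Your $d^2\nu_{\bar1}=0$ argument with \eqref{eq-Gauss2} is equivalent, since \eqref{eq-Gauss2} is exactly $d(x^*\omega_{0\bar0}+x^*\omega_{1\bar1})=x^*(\Omega_{0\bar0}+\Omega_{1\bar1})$ combined with \eqref{eq-Ric}.

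The gap is the sign, which is the whole content of a commutation formula. You leave it open (``up to an overall sign'') and then assert that comparing coefficients gives the stated coefficient $-1$. It does not. Your Step 3 display is correct exactly as written: the connection terms cancel and no sign flip occurs. It yields
\[
\nu_{\bar1\bar11}-\nu_{\bar11\bar1}=\tfrac12\,(R_{1212}+R_{1234})\,\nu_{\bar1}=+\,\mathrm{Ric}(e_1,\bar e_1)\,\nu_{\bar1}.
\]
The paper's own last line gives the same: $\nu_{\bar1\bar11}\theta\wedge\bar\theta=\nu_{\bar11\bar1}\theta\wedge\bar\theta-x^*(\Omega_{0\bar0}+\Omega_{1\bar1})\nu_{\bar1}$ together with \eqref{eq-Ric} produces a plus sign. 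So the displayed statement has a sign typo. Also, $\operatorname{Ric}(\bar e_1,\bar e_1)$ must be read as $\mathrm{Ric}(e_1,\bar e_1)$: under the complex-bilinear extension it vanishes, because the Ricci tensor of a K\"ahler metric is $J$-invariant. You should state both corrections rather than claim agreement with the statement.

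The plus sign is also what the paper uses afterwards. Since $\int|\nu_{\bar1\bar1}|^2dA=-\int\nu_{\bar1\bar11}\bar\nu_1\,dA$ and $\int\nu_{\bar11\bar1}\bar\nu_1\,dA=-\int|\nu_{\bar11}|^2dA$, only $+\mathrm{Ric}(e_1,\bar e_1)$ yields \eqref{eq-W+3}, and hence the bound $\Lambda_1\ge 2\,\mathfrak{Ric}$.

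Your proposed calibration would not have settled this: on a flat torus both sides vanish identically, so that test detects neither the sign nor the factor. A curved test does. For a function $u$ on a surface, the same computation gives $u_{\bar1\bar11}-u_{\bar11\bar1}=\tfrac{K}{2}u_{\bar1}$. On the unit sphere, take $u$ to be the restriction of a linear function. Then $u_{\bar1\bar1}=0$ (the trace-free Hessian vanishes), $u_{\bar11}=-\tfrac12 u$, and $\int|u_{\bar1}|^2=\tfrac12\int u^2$. The identity $0=\int|u_{\bar1\bar1}|^2=-\int u_{\bar1\bar11}\bar u_1$ then reads $0=\tfrac14\int u^2-\tfrac14\int u^2$ with the plus sign, but $0=\tfrac12\int u^2$ with the minus sign.

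There is also a minor slip in Step 1: $x^*\omega_{1\bar0}=c\,\bar\theta$ is the second fundamental form and does not vanish in general. The term $\omega_{1\bar0}\wedge\omega_0$ drops out because $x^*\omega_0=0$, so $d\theta=-i\theta_{12}\wedge\theta$ still holds.
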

\begin{proof}
By the definition \eqref{eq-vb1b1}, 
\begin{align}\label{eq-vric}
\nu_{\bar1\bar{1}1}{\theta} \wedge \bar\theta
= \bigl( d\nu_{\bar1\bar1} + (2i\theta_{12}+ i\theta_{34}) \nu_{\bar1\bar 1} \bigr) \wedge \bar\theta = d(\nu_{\bar1\bar1}\bar\theta) +  (i\theta_{12}+ i\theta_{34}) \nu_{\bar1\bar 1} \wedge \bar\theta. 
\end{align}
Using \eqref{eq-v1vb1}, we have 
$$\begin{aligned}d(\nu_{\bar1\bar1} \bar\theta)&=d\bigl((i\theta_{12}+ i\theta_{34}) \nu_{\bar 1} -\nu_{\bar11}\theta\bigr)\\
&=-(i\theta_{12}+ i\theta_{34})\wedge d \nu_{\bar 1}-d(x^*\omega_{0\bar0}+x^*\omega_{1\bar1})\nu_{\bar 1}-(d\nu_{\bar11}-i\theta_{12}\nu_{\bar11})\wedge \theta\\
&=-(i\theta_{12}+ i\theta_{34})\wedge (\nu_{\bar 1\bar1}\bar\theta+\nu_{\bar 11}\theta)-x^*(d\omega_{0\bar0}+d \omega_{1\bar1})\nu_{\bar 1}-(d\nu_{\bar11}-i\theta_{12}\nu_{\bar11})\wedge \theta\\
&=-\nu_{\bar 1\bar1}(i\theta_{12}+ i\theta_{34})\wedge \bar\theta-x^*(\Omega_{0\bar0}+ \Omega_{1\bar1})\nu_{\bar 1}-(d\nu_{\bar11}+i\theta_{34}\nu_{\bar11})\wedge \theta.
\end{aligned}$$
Substituting this into \eqref{eq-vric} yields 
$$\nu_{\bar1\bar{1}1}{\theta} \wedge \bar\theta=\nu_{\bar1 1 \bar1}\theta\wedge\bar\theta-x^*(\Omega_{0\bar0}+ \Omega_{1\bar1})\nu_{\bar 1}.$$
The conclusion then follows from \eqref{eq-Ric}. 
\end{proof}

\begin{proof}[\textbf{\textit{Proof of Theorem~\ref{thm-main11}}}] 
Substituting the Ricci identity established in Lemma~\ref{lem-Ricci} into the formula \eqref{eq-W+2}, we obtain 
\begin{equation}\label{eq-W+3}
    \left.\frac{\partial^2}{\partial t^2} \right|_{t=0}\mathcal{W}^+(x_t)=
8 \int_{\Sigma} \Bigl( |\nu_{\bar11}|^2-|\nu_{\bar1}|^2\mathrm{Ric}(e_1,\bar e_1)\Bigr) dA,
\end{equation}
where integration by parts is used again.

Let \(V = \nu e_{0} + \bar\nu \bar e_{0}\) be an eigensection of \(N_\Sigma\) corresponding to the first eigenvalue $\Lambda_1$ of $\mathcal{L}$, i.e., 
\[
\mathcal{L}(V) = -\Lambda_{1}V.
\]
From Remark~\ref{rk-L}, we have 
\[
4\bar\nu_{1\bar1}\,\bar{e}_0+ 4\nu_{\bar11}\,e_0 =- \Lambda_{1} (\bar\nu\,\bar e_{0} + \nu e_{0}),
\]
which gives 
\begin{equation}
  \label{eq-lambda1}
\nu_{\bar11} = -\frac{\Lambda_{1}}{4} \nu, \qquad
\int_{\Sigma} |\nu_{\bar1}|^{2} dA =-\int_{\Sigma} \nu_{\bar1 1}\bar v dA = \frac{\Lambda_{1}}{4} \int_{\Sigma} |\nu|^{2} dA.  
\end{equation}

Using \eqref{eq-W+3} and \eqref{eq-lambda1}, we obtain that along the variation determined by $V$, 
\begin{equation}\label{eq-varW}
 \frac{\partial^{2}}{\partial t^{2}} \bigg|_{t=0} \mathcal{W}^{+}
\leq 8 \int_{\Sigma} \Bigl( |\nu_{\bar11}|^2-\frac{1}{2}\mathfrak{Ric}\,|\nu_{\bar1}|^2\Bigr) dA 
\le \frac{1}{2} \int_{\Sigma} \bigl( \Lambda_{1}^{2} - 2 \Lambda_{1}\mathfrak{Ric} \bigr) |\nu|^{2} dA,    
\end{equation}
where the first inequality follows from
 $$\operatorname{Ric}(e_{1}, \bar e_{1})=\operatorname{Ric}(\mathrm{Re}(e_{1}), \mathrm{Re}(e_{1}))+\operatorname{Ric}(\mathrm{Im}(e_{1}), \mathrm{Im}(e_{1}))\geq \frac{1}{2}\mathfrak{Ric},$$ 
with $\mathrm{Re}(e_{1})$ and $\mathrm{Im}(e_{1})$ denoting the real and imaginary parts of $e_1$. The fact that any complex curve minimizes $\mathcal{W}^+$ implies the left-hand side of \eqref{eq-varW} is nonnegative. Together with $\Lambda_1>0$, this yields the lower bound  
$$\Lambda_1\geq 2\,\mathfrak{Ric}.$$
\end{proof}
\section{The proof of Theorem 2}\label{sec5}
We denote by ${E}_0^{\mathcal{W}^+}$ (resp. $E_0^{\mathcal{A}}$) the space of Jacobi fields associated with  $\mathcal{W}^+$ (resp. $\mathcal{A}$). It follows from \eqref{eq-Area} and \eqref{eq-Will} that 
\begin{equation}\label{eq-Jaco}
    E_0^{\mathcal{W}^+}=\{V=\nu  e_0+\bar\nu\bar e_0 \mid \nu_{\bar1\bar1}=0\},~~~~~~E_0^{\mathcal{A}}=\{V=\nu e_0+\bar\nu\bar e_0\mid  \nu_{\bar1}=0\}.
\end{equation}
Obviously, there holds 
$$E_0^{\mathcal{A}}\subset E_0^{\mathcal{W}^+}.$$
\begin{proof}[\textbf{\textit{Proof of Theorem~\ref{thm-main2}}}] 
Since $\mathrm{Ric}=\mathfrak{c}g$, the first conclusion is a direct corollary of Theorem~\ref{thm-main11}. Meanwhile,  
Remark~\ref{rk-L} yields  
$$|\mathcal{L}(V)|^2=16|\nu_{\bar1 1}|^2, ~~~\langle \mathcal{L}(V), V\rangle =(2\bar\nu \nu_{\bar1 1}+2\nu \bar\nu_{1 \bar1}).$$
Together with $\mathrm{Ric}(e_1, \bar e_1)=\frac{\mathfrak{c}}{2}$ and integration by parts, we obtain from \eqref{eq-W+3} that 
\begin{equation}\label{eq-W+5}
  \left. \frac{\partial^2}{\partial t^2} \right|_{t=0} W^+ 
= \frac{1}{2} \int_{\Sigma} \bigl(|\mathcal{L}(V)|^2 + 2\mathfrak{c} \langle \mathcal{L}(V), V \rangle \bigr) dA.   
\end{equation}

{\bf Claim 1.} {\em If $E_0^{\mathcal{W}^+}\setminus E_0^{\mathcal{A}}\neq\emptyset$, then $\Lambda_1=2\mathfrak{c}$ and $E_0^{\mathcal{W}^+}\setminus E_0^{\mathcal{A}}$ is exactly the first eigenspace of $\mathcal{L}$}.  
\vskip 0.2cm
\noindent To prove this claim, first note that  $\mathcal{L}$ is a nonnegative self-adjoint operator. Hence, for each eigenvalue $\Lambda_j$ of $\mathcal{L}$, we can choose a basis $\{\mathrm{V}_{j_1}, \mathrm{V}_{j_2}, \cdots \mathrm{V}_{j_{k_j}}\}$ of the corresponding eigenspace $E_j^{\mathcal{A}}$ such that the collection  
$$\{\mathrm{V}_{j_l}\mid 1\leq l\leq {k_j}, j\in \mathbb{Z}^{\geq 0}\}$$
forms a complete orthonormal system  of \( L^2(\Gamma(N_\Sigma)) \). 
Let $V\in E_0^{\mathcal{W}^+}\setminus E_0^{\mathcal{A}}$ be a Jacobi field of $\mathcal{W}^+$. Write  
\[V = \sum_{j=0}^{+\infty}\sum_{l=1}^{k_j} \varepsilon_{j_l} \mathrm{V}_{j_l}.\]
Then we have  
\[\mathcal{L}(V) = -\sum_{j=1}^{+\infty}\sum_{l=1}^{k_j}  \Lambda_j \varepsilon_{j_l} \mathrm{V}_{j_l}.\]
and then  
\[\int_{\Sigma}|\mathcal{L}(V)|^2 dA = \sum_{j=1}^{+\infty}\sum_{l=1}^{k_j} \varepsilon_{j_l}^2\Lambda_j^2, \quad\quad \int_{\Sigma}\langle \mathcal{L}(V), V \rangle dA= -\sum_{j=1}^{+\infty}\sum_{l=1}^{k_j} \varepsilon_{j_l}^2 \Lambda_j.\]
Substituting these equalities into \eqref{eq-W+5}, we derive  
\[\sum_{j=1}^{+\infty}\sum_{l=1}^{k_j} \varepsilon_{j_l}^2 \Lambda_j(\Lambda_j - 2\mathfrak{c}) = 0.\]
It follows from 
$$\sum_{j=1}^{+\infty}\sum_{l=1}^{k_j}\varepsilon_{j_l}^2\neq0,~~\text{and}~~\Lambda_{j}>\Lambda_1\geq 2\mathfrak{c},~j\geq 2$$
that 
$$\Lambda_1=2c,~~\text{and}~~\varepsilon_{j_l}=0,~j\geq 2,$$
which implies $V$ is an eigensection associated with the first eigenvalue $\Lambda_1=2\mathfrak{c}$. Therefore, we obtain 
$$E_0^{\mathcal{W}^+}\setminus E_0^{\mathcal{A}}\subset E_1^{\mathcal{A}}.$$
When $\Lambda_1=2\mathfrak{c}$, the converse inclusion follows directly from \eqref{eq-W+5}. Thus Claim $1$ is proved. 

Next, to determine when $E_0^{\mathcal{W}^+}\setminus E_0^{\mathcal{A}}$ is nonempty, we compute the  dimensions of $E_0^{\mathcal{W}^+}$ and $E_0^{\mathcal{A}}$. 

It is well known that the normal bundle \(N_{\Sigma}\) of \(x\) admits a natural holomorphic structure $$\bar{\partial}:\Gamma(N_\Sigma)\rightarrow \Gamma(N_\Sigma\otimes \overline{K_\Sigma}),$$
where $\overline{K_\Sigma}$ is the line bundle defined by $(0,1)$-forms on $\Sigma$. Following \cite{MW}, this structure is obtained by identifying \(N_{\Sigma}\) with the line bundle \(N_{\Sigma}^{\mathbb{C}^{(1,0)}}\)corresponding to the \((1,0)\)-part of the complexified normal bundle \(N_{\Sigma}^{\mathbb{C}}\); the \((0,1)\)-part of the covariant differential then induces a holomorphic structure via the Koszul-Malgrange theorem. By our definition \eqref{eq-Dv}, $V=\nu e_0+\bar\nu\bar e_0\in\Gamma(N_\sigma)$ is holomorphic with respect to  $\bar{\partial}$ if and only if $\nu_{\bar1}=0$. Then, it follows from \eqref{eq-Jaco} that $E_0^{\mathcal{A}}$ is exactly the space $H^0(\Sigma, N_\Sigma)$ of holomorphic section of $N_\Sigma$. 

Similarly, the \((0,1)\)-part of the covariant differential induces on the line bundle \(N_\Sigma \otimes \overline{K_\Sigma}\) a holomorphic structure
\[
\bar{\partial}_1 : \Gamma(N_\Sigma \otimes \overline{K_\Sigma}) \rightarrow \Gamma(N_\Sigma \otimes \overline{K_\Sigma}^2),
\]
again via the Koszul--Malgrange theorem. For a section \(V = \nu e_0 + \bar\nu \bar e_0 \in \Gamma(N_\Sigma)\), \eqref{eq-vb11} implies that \(\nu_{\bar1\bar1} = 0\) if and only if
\(
\bar{\partial}_1 \bar{\partial}(V) = 0.
\)
Therefore, by \eqref{eq-Jaco} we obtain 
\begin{equation}\label{eq-E0W}
E_0^{\mathcal{W}^+}=\{V\in \Gamma(N_\Sigma) \mid \bar{\partial}_1 \bar{\partial}(V)=0\}.
\end{equation}

{\bf Claim 2.} {\em As an operator, $\bar{\partial}:\Gamma(N_\Sigma)\rightarrow \Gamma(N_\Sigma\otimes \overline{K_\Sigma})$ is surjective.}
\vskip 0.2cm
To prove this claim, we need only show the vanishing of the Dolbeaut cohomology $H^{0,1}_{\bar\partial}(\Sigma, N_\Sigma)$. By Dolbeaut Theorem and Serre duality, we have 
$$H^{0,1}_{\bar\partial}(\Sigma, N_\Sigma)\cong H^1(\Sigma, N_\Sigma)\cong H^0(\Sigma, N_\Sigma^*\otimes K_\Sigma)^*.$$
From the Gauss equation \eqref{eq-Gauss2} we obtain
$$
\begin{aligned}\deg(N_\Sigma^*\otimes K_\Sigma)&=c_1(N_\Sigma^*\otimes K_\Sigma)=-\big(c_1(N_\Sigma)+c_1(T\Sigma)\big)\\
&=-\frac{1}{2\pi}\int_{\Sigma}(R_{1212}+R_{1234})dA=-\mathfrak{c}\frac{\mathrm{Area}(\Sigma)}{2\pi},
\end{aligned}$$
where $c_1(\cdot)$ denotes the first Chern number. Since $\mathfrak{c}>0$, it follows that $\deg(N_\Sigma^*\otimes K_\Sigma)<0$, which implies 
$$\dim H^{0,1}_{\bar\partial}(\Sigma, N_\Sigma)= \dim H^0(\Sigma, N_\Sigma^*\otimes K_\Sigma)^*=0.$$
This completes the proof of Claim $2$. 

From \eqref{eq-E0W} and Claim $2$, we derive 
$$\dim E_0^{\mathcal{W}^+}=\dim \ker{\bar\partial_1}+\dim \ker{\bar\partial}= \dim H^0(\Sigma, N_\Sigma \otimes \overline{K_\Sigma})+\dim E_0^{\mathcal{A}}.$$
By Riemann-Roch Theorem, we have 
$$\begin{aligned}\dim H^0(\Sigma, N_\Sigma \otimes \overline{K_\Sigma})=&\deg(N_\Sigma \otimes \overline{K_\Sigma})+1-g+\dim H^0(\Sigma, N_\Sigma^* \otimes K_\Sigma^2)\\
=&\,\mathfrak{c}\frac{\mathrm{Area}(\Sigma)}{2\pi}+1-g+\dim H^0(\Sigma, N_\Sigma^* \otimes K_\Sigma^2).
\end{aligned}$$
So when $g\leq 1$, there holds 
$$\dim E_0^{\mathcal{W}^+}-\dim E_0^{\mathcal{A}}=\dim H^0(\Sigma, N_\Sigma \otimes \overline{K_\Sigma})\geq 1.$$
Together with Claim 1, this yields the second conclusion of this theorem. 
\end{proof}
\begin{remark}
    It follows from the proof of Claim 1 that for complex curves in K\"ahler-Einstein surfaces of nonpositive scalar curvature, $${E}^{\mathcal{W}^+}_0={E}^{\mathcal{A}}_0,$$
    i.e., the Jacobi fields of $\mathcal{W}^+$ are exactly the same as those of $\mathcal{A}$.
\end{remark}

\textbf{Acknowledgement:}
This work was supported by NSFC No. 12171473, as well as the Fundamental Research Funds for the Central Universities.

\noindent{Zhenxiao Xie}\\
\noindent{\small \em School of Mathematical Sciences, Beihang University, Beijing 102206, China.}\\
\noindent{\em Email: xiezhenxiao@buaa.edu.cn}
\\

\end{document}